\documentclass[reqno]{amsart}

\usepackage{enumerate}
\usepackage{comment}
\usepackage{empheq}
\usepackage{amsmath}
\usepackage{amssymb}
\usepackage{array}
\usepackage{graphicx}
\usepackage{tikz}
\usepackage{pgf}
\usepackage{pgfpages}
\usepackage{comment}
\usetikzlibrary{decorations.pathreplacing}
\usetikzlibrary{calc}
\usetikzlibrary{decorations}

\usetikzlibrary{matrix}
\tikzstyle{vertex}=[circle,thin,draw=black!100,fill=black!100, inner sep=0pt, minimum width=4pt]
\tikzstyle{vertexsm}=[circle,thin,draw=black!100,fill=black!100, inner sep=0pt, minimum width=1.2pt]
\tikzstyle{vertexnm}=[circle,thin,draw=black!100,fill=white!100, inner sep=0pt, minimum width=4pt]
\tikzstyle{vertexg}=[circle, draw=black, inner sep=1pt, style=densely dotted, minimum width=4pt]
\tikzstyle{vertexinf}=[circle,thin,draw=black!100,fill=white!100, inner sep=0pt, minimum width=10pt]
\tikzstyle{pedge}=[draw=black!100,-]
\tikzstyle{nedge}=[draw=black!100,densely dashed]
\tikzstyle{gedge}=[draw=black!100,densely dotted]

\newtheorem{theorem}{Theorem}

\newtheorem{lemma}[theorem]{Lemma}
\newtheorem{corollary}[theorem]{Corollary}

\begin{document}

\title{An extension of Hoffman and Smith's subdivision theorem}
\author{Lee Gumbrell}
\address{Department of Mathematics, Royal Holloway, University of
London, Egham Hill, Egham, Surrey, TW20 0EX, England, UK.}
\email{Lee.Gumbrell.2009@rhul.ac.uk}

\subjclass{05C50}
\keywords{subdivision, graph spectra, Hoffman and Smith}

\begin{abstract}
In $1975$ Hoffman and Smith showed that for a graph $G\ne\tilde{D}_n$ with an internal path, the value of the largest eigenvalue decreases strictly each time we subdivide the internal path. In this paper we extend this result to show that for a graph $G\ne K_{1,4}$ with a vertex of degree $4$ or more, we can subdivide said vertex to create an internal path and the value of the largest eigenvalue also strictly decreases.
\end{abstract}

\maketitle

In \cite{HoSm1975} (Proposition 2.4) Hoffman and Smith proved an important result about the largest eigenvalue, or spectral radius, of (the adjacency matrix of) a graph, which we shall call $\rho\left(G\right)$ or $\rho$ if it is clear which graph $G$ we are talking about. Define an internal path of a graph to be a sequence of at least two adjacent vertices all with vertex-degrees $2$, except the two end-vertices who each have degree strictly greater than $2$ (and may possibly be the same vertex). Also define a subdivision of an internal path to be the same path, but with one more degree $2$ vertex in the sequence. A subdivided graph $G'$ is isomorphic $G$ except with one more vertex and one more edge on the subdivided internal path. The result they proved is the following:

\begin{theorem}[Hoffman and Smith's subdivision theorem, \cite{HoSm1975}, also see Theorem 3.2.3 of \cite{CRS1997}]\label{T:hoff}
Let $G$ be a graph with an internal path, and let $G'$ be the graph obtained by subdividing an edge on that path. If $G$ is not equal to the graph $\tilde{D}_n$ in Figure \ref{F:dn} then $\rho\left(G'\right)<\rho\left(G\right)$.
\end{theorem}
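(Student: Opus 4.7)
The approach is variational: by the Rayleigh characterization $\rho(G)=\sup_{z>0}z^{T}A(G)z/z^{T}z$, producing a positive vector $y$ on $V(G)$ with $y^{T}A(G)y>\rho' y^{T}y$, where $\rho':=\rho(G')$, will imply $\rho(G)>\rho'$. Let $x>0$ be the Perron eigenvector of $G'$ with eigenvalue $\rho'$. Label the internal path of $G$ as $v_{0}v_{1}\cdots v_{k+1}$ with $\deg v_{0},\deg v_{k+1}\ge 3$, and suppose the subdivision inserts a new vertex $w$ between $v_{j}$ and $v_{j+1}$. A useful preliminary is that retaining the internal path together with two off-path neighbors at each endpoint yields a copy of $\tilde{D}_{m}$ (for some $m$) as a subgraph of $G$; by the hypothesis $G\ne\tilde{D}_{n}$ the containment is proper, so $\rho(G)>\rho(\tilde{D}_{m})=2$, and the same reasoning applied to $G'$ gives $\rho'>2$.

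The naive candidate $y=x|_{V(G)}$ (the Perron eigenvector with the coordinate $x(w)$ deleted) gives, after using $\rho' x(w)=x(v_{j})+x(v_{j+1})$ together with $E(G)\triangle E(G')=\{v_{j}w,wv_{j+1},v_{j}v_{j+1}\}$, the clean identity
\[
y^{T}A(G)y-\rho' y^{T}y \;=\; 2\,x(v_{j})x(v_{j+1})-\rho'\,x(w)^{2}.
\]
This quantity is not sign-definite in isolation, so one must modify $y$ along the whole internal path rather than simply drop $x(w)$. Writing $\rho'=t+t^{-1}$ with $t>1$ (valid since $\rho'>2$), the Perron values along the subdivided path $u_{0},\ldots,u_{k+2}$ in $G'$ take the Chebyshev form $x(u_{i})=c_{+}t^{i}+c_{-}t^{-i}$, with $c_{\pm}$ constrained by the eigenvalue equations at $v_{0}$ and $v_{k+1}$ involving their off-path neighbors. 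I would then define $y$ to agree with $x$ off the path, and on the $k+2$-vertex $G$-path set $y(v_{i})=d_{+}t^{i}+d_{-}t^{-i}$ with $d_{\pm}$ pinned down by the boundary conditions $y(v_{0})=x(v_{0})$ and $y(v_{k+1})=x(v_{k+1})$. By construction, the Chebyshev recurrence forces $(A(G)y)(v_{i})=\rho' y(v_{i})$ at every degree-2 interior vertex of the $G$-path.

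The Rayleigh gap therefore localizes to the two endpoints as $x(v_{0})(y(v_{1})-x(v_{1}))+x(v_{k+1})(y(v_{k})-x(v_{k}))$, which after solving the $2\times 2$ system for $d_{\pm}$ becomes a rational expression in $t$ and $c_{\pm}$. The main obstacle is showing this expression is strictly positive for $t>1$: its sign is controlled by a determinant built from the $k$-step Chebyshev transfer matrix across the internal path, together with the strict positivity of the off-path neighbor contributions that fix $c_{\pm}$ through the endpoint eigenvalue equations. In the degenerate limit $t\to 1$ (i.e.\ $\rho'\to 2$) the expression collapses to zero, and tracing back the equality conditions---flat Perron values along the path, exactly two pendant neighbors at each endpoint, and no extra edges---recovers precisely $G=\tilde{D}_{n}$ as the unique exception. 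The edge cases $j=0$ and $j=k$, where the subdivision occurs at the first or last edge of the internal path, are routine bookkeeping variants of the same computation.
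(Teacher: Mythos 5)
The paper does not actually prove this theorem: it is quoted as a known result of Hoffman and Smith, with references to \cite{HoSm1975} and to Theorem 3.2.3 of \cite{CRS1997}, so there is no in-paper proof to compare against. Judged on its own terms, your proposal is a plan rather than a proof, and it has two concrete problems. The shape is reasonable (Rayleigh comparison using the Perron vector $x$ of $G'$, the preliminary $\rho(G),\rho(G')>2$, and your two identities -- the gap $2x(v_j)x(v_{j+1})-\rho'x(w)^2$ for the naive restriction, and the localization of the gap to the two path-endpoints after re-weighting the interior of the path by $d_+t^i+d_-t^{-i}$ -- all check out). But the decisive step, strict positivity of $x(v_0)\left(y(v_1)-x(v_1)\right)+x(v_{k+1})\left(y(v_k)-x(v_k)\right)$, is exactly where the entire content of the theorem sits, and you only label it ``the main obstacle,'' to be controlled by an unspecified transfer-matrix determinant together with unspecified ``off-path contributions.'' Nothing in the writeup establishes that inequality, and the $t\to 1$ ``equality analysis'' is not a substitute for it, since for $G\ne\tilde D_n$ one never has $t=1$.

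More seriously, the construction breaks outright when the internal path of $G$ is a single edge $v_0v_{k+1}$ between two branch vertices -- a case explicitly allowed by the paper's definition (``at least two adjacent vertices'') and the one closest to the paper's own Theorem \ref{T:hoffsmith0}. Then the $G$-path has no interior vertices, your boundary conditions force $y=x|_{V(G)}$, and the gap is precisely the quantity $2x(v_0)x(v_{k+1})-\rho'x(w)^2$ that you correctly observed is not sign-definite. It can in fact be negative: attach $v_0$ to a $K_5$ and give $v_{k+1}$ two pendant neighbours; then $\rho'\approx 4$, the eigenvalue equations give $x(v_{k+1})\approx x(w)/3.5$ and $x(v_0)\approx 3.7\,x(w)$, so $2x(v_0)x(v_{k+1})\approx 2.1\,x(w)^2<\rho'x(w)^2$. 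So in this case your test vector has Rayleigh quotient \emph{below} $\rho'$ and the method certifies nothing; this is not ``routine bookkeeping.'' (You also do not treat the case, permitted by the definition, where the two endpoints of the internal path coincide.) This degeneracy is the reason the standard proofs -- and the paper's proof of its own Theorem \ref{T:hoffsmith0}, which is the length-zero analogue -- run in the opposite direction: take the Perron vector $z$ of $G$, use the freedom to choose the value $\hat z(w)$ at the new vertex, and verify $A(G')\hat z\le\rho(G)\hat z$ entrywise with strict inequality somewhere, invoking Perron--Frobenius. That extra degree of freedom at $w$ is exactly what your endpoint-pinned interpolation lacks.
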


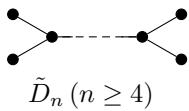
\begin{figure}[h]
\begin{center}
\begin{tabular}{c}
\begin{tikzpicture}[scale=0.6, auto] 
\foreach \pos/\name in
{{(0,0)/a},{(0.866,0.5)/b},{(0.866,-0.5)/c},{(-2,0)/aa},{(-2.866,0.5)/ab},{(-2.866,-0.5)/ac}}
\node[vertex] (\name) at \pos {};
\foreach \pos/\name in
{{(-0.667,0)/ax},{(-1.333,0)/ay}}
\node[] (\name) at \pos {};
\node[] at (-3.066,-0.5) {};
\foreach \edgetype/\source/ \dest in {pedge/a/b,pedge/c/a,pedge/a/ax,pedge/ay/aa,pedge/aa/ab,pedge/aa/ac,nedge/a/aa}
\path[\edgetype] (\source) -- (\dest);
\end{tikzpicture}
\\
$\tilde{D}_n \left(n \geq 4\right)$
\end{tabular}
\caption{The graph $\tilde{D}_n$ on $n+1$ vertices.}
\label{F:dn}
\end{center}
\end{figure}

We will now extend their result. Instead of requiring that the path has at least one edge, we start with a graph $G\ne K_{1,4}$, the star on $5$ vertices, with a vertex $v$ of degree at least $4$ and split this vertex into two new adjacent vertices, each adjacent to at least two of $v$'s neighbours. We will show that this new graph also has largest eigenvalue strictly less than the original graph. Figure \ref{F:hoffsmithsimple} shows the relevant vertices and edges of a graph to demonstrate this process, where $G_v$ is our subdivided graph. Informally speaking, Hoffman and Smith proved the result for paths with number of edges from $1$ to infinity and in Theorem \ref{T:hoffsmith0} we will prove the result for paths of length $0$, completing the picture.

\begin{figure}[h]
\begin{center}
\begin{tabular}{cc}
\begin{tikzpicture}[scale=0.6, auto] 
\draw (-1.5,1.5) -- (1.5,1.5);
\draw (1.5,1.5) -- (1.5,-1.5);
\draw (1.5,-1.5) -- (-1.5,-1.5);
\draw (-1.5,-1.5) -- (-1.5,1.5);
\foreach \pos/\name in
{{(0,0)/v},{(0.707,0.707)/a},{(0.707,-0.707)/b},{(-0.707,-0.707)/c},{(-0.707,0.707)/d}}
\node[vertex] (\name) at \pos {};
\foreach \pos/\name in
{{(0.924,0.383)/aa},{(1,0)/ab},{(0.924,-0.383)/ac},{(-0.924,0.383)/da},{(-1,0)/db},{(-0.924,-0.383)/dc}}
\node[vertexsm] (\name) at \pos {};
\foreach \edgetype/\source/ \dest in {pedge/v/b,pedge/c/v,pedge/a/v,pedge/v/d}
\path[\edgetype] (\source) -- (\dest);
\node[] at (0,0.5) {$v$};
\foreach \pos/\name in
{{(-1.7,0)/zl},{(1.7,0)/zr},{(0,1.7)/zt},{(0,-1.7)/zb}}
\node[] (\name) at \pos {};
\end{tikzpicture}
&
\begin{tikzpicture}[scale=0.6, auto] 
\draw (-1.5,1.5) -- (2.5,1.5);
\draw (2.5,1.5) -- (2.5,-1.5);
\draw (2.5,-1.5) -- (-1.5,-1.5);
\draw (-1.5,-1.5) -- (-1.5,1.5);
\foreach \pos/\name in
{{(0,0)/v},{(1,0)/w},{(1.707,0.707)/a},{(1.707,-0.707)/b},{(-0.707,-0.707)/c},{(-0.707,0.707)/d}}
\node[vertex] (\name) at \pos {};
\foreach \pos/\name in
{{(1.924,0.383)/aa},{(2,0)/ab},{(1.924,-0.383)/ac},{(-0.924,0.383)/da},{(-1,0)/db},{(-0.924,-0.383)/dc}}
\node[vertexsm] (\name) at \pos {};
\foreach \edgetype/\source/ \dest in {pedge/w/b,pedge/c/v,pedge/a/w,pedge/v/d,pedge/v/w}
\path[\edgetype] (\source) -- (\dest);
\node[] at (0,0.5) {$v_1$};
\node[] at (1,0.5) {$v_2$};
\foreach \pos/\name in
{{(-1.7,0)/zl},{(2.7,0)/zr},{(0,1.7)/zt},{(0,-1.7)/zb}}
\node[] (\name) at \pos {};
\end{tikzpicture}
\\
$G$ & $G_v$
\end{tabular}
\caption{An example of subdividing a vertex $v$ of degree at least $4$ from $G$ to create a graph $G_v$ with an internal path. Only the relevant vertices and edges of the graphs have been drawn.}
\label{F:hoffsmithsimple}
\end{center}
\end{figure}
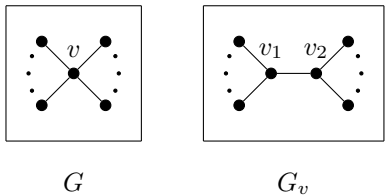

In \cite{Si1987} (also see Theorem 3.2.1 of \cite{CRS1997}), Simi{\'c} showed a similar result about splitting a vertex, but where the two new vertices were not adjacent to each other. The difference between the two results is subtle, and the direct proofs use the same technique, but after proving Theorem \ref{T:hoffsmith0} we will show how it can be used to offer a new proof of Simi{\'c}'s theorem.

We require the following results from Perron and Frobenius, and will refer to them frequently throughout this paper.

\begin{lemma}[Perron-Frobenius; see \cite{GoRo2001}, Theorem 8.8.1]\label{L:pf}
Let $A\left(G\right)$ be the adjacency matrix of a connected graph $G$, then:
\begin{enumerate}[(i)]
	\item $\rho\left(G\right)$ is a simple eigenvalue of $A\left(G\right)$ and, if $z$ is an eigenvector for $\rho$, then none of the entries of $z$ are zero and they all have the same sign;
	\item for another graph $H$, if $A\left(G\right)-A\left(H\right)$ is non-negative then $\rho\left(H\right)\leq\rho\left(G\right)$, with equality if and only if $G$ and $H$ are isomorphic.
\end{enumerate}
\end{lemma}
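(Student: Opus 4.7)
The plan is to derive both parts from standard tools for non-negative matrices, using two features of $A := A(G)$ for a connected graph $G$: $A$ is real symmetric with non-negative entries, and it is irreducible in the combinatorial sense that for every pair of vertices $i,j$ some power $(A^k)_{ij}$ is strictly positive, this being precisely the existence of a walk from $i$ to $j$ in $G$.

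For part (i), I would first produce a non-negative eigenvector for $\rho$ via the Rayleigh quotient: since $A$ is symmetric,
\[
\rho(G) \;=\; \max_{x \ne 0}\, \frac{x^{T} A x}{x^{T} x},
\]
and any maximiser $z$ is an eigenvector for $\rho$. Because every entry of $A$ is non-negative, replacing $z$ by its entrywise absolute value $|z|$ can only increase $x^{T} A x$ while keeping $x^{T} x$ unchanged, so I may take $z \ge 0$. I would then promote $z \ge 0$ to $z > 0$: if $z_i = 0$ for some $i$, then from $(Az)_i = \rho z_i = 0$ together with $A_{ij} z_j \ge 0$ for all $j$, I get $z_j = 0$ for every neighbour $j$ of $i$, and iterating along walks out of $i$ forces $z \equiv 0$ by connectedness, contradicting that $z$ is an eigenvector. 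Simplicity would follow by a standard trick: if $w$ is another real eigenvector for $\rho$, the same Rayleigh argument applied to $|w|$, combined with irreducibility, makes $w$ itself (up to sign) strictly positive; choosing $c = \min_i z_i / w_i$, the vector $z - c w$ is a non-negative element of the $\rho$-eigenspace with a zero entry, so it must vanish, showing $w$ is a scalar multiple of $z$.

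For part (ii), I would apply Rayleigh--Ritz using a non-negative eigenvector $y$ of $A(H)$ for $\rho(H)$, obtained as in (i). Since $A(G) - A(H) \ge 0$ entrywise and $y \ge 0$,
\[
\rho(G) \;\ge\; \frac{y^{T} A(G) y}{y^{T} y} \;=\; \frac{y^{T} A(H) y}{y^{T} y} + \frac{y^{T}\bigl(A(G) - A(H)\bigr) y}{y^{T} y} \;\ge\; \rho(H).
\]
If $\rho(G) = \rho(H)$ then both inequalities are equalities. The first forces $y$ to be a Rayleigh maximiser for $A(G)$, hence a Perron eigenvector of the connected graph $G$, which is strictly positive by part (i); the second then reads $y^{T}\bigl(A(G) - A(H)\bigr) y = 0$, and since $y > 0$ and $A(G) - A(H) \ge 0$, every entry of $A(G) - A(H)$ must vanish, giving $A(G) = A(H)$ and hence isomorphism.

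The main obstacle I anticipate is the simplicity clause in (i): producing one strictly positive eigenvector is routine, but excluding a second linearly independent one in the $\rho$-eigenspace requires the more delicate observation that any real $\rho$-eigenvector must have constant sign on each connected component, after which one-dimensionality follows from the cancellation trick above. Once simplicity and strict positivity are in hand, part (ii) and its equality clause flow cleanly from the variational characterisation and the non-negativity of $A(G) - A(H)$.
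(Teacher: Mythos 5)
The paper does not prove this lemma at all: it is stated as a standard fact and attributed to Godsil and Royle (Theorem 8.8.1), so there is no in-paper argument to compare against. Your self-contained proof via the Rayleigh quotient is correct and is one of the two standard routes (the other being the iterative/cone-based Perron--Frobenius argument for irreducible non-negative matrices, which is what general references tend to use and which does not need symmetry). Your approach buys a shorter path because $A(G)$ is symmetric: the $|x|$ trick immediately yields a non-negative maximiser, connectedness upgrades it to a strictly positive eigenvector, and part (ii) together with its equality clause falls out of the variational characterisation. Two small points worth making explicit if you were to write this up: first, you silently identify the spectral radius with the largest eigenvalue $\lambda_{\max}$; this is true for non-negative symmetric matrices and follows from the same inequality $|x^{T}Ax|\leq|x|^{T}A|x|$, but it deserves a sentence. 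Second, the simplicity step is the most compressed part of your sketch --- the passage from ``$|w|$ is a Rayleigh maximiser'' to ``$w$ has constant sign'' requires analysing the equality case of $\sum A_{ij}w_iw_j\leq\sum A_{ij}|w_i||w_j|$ edge by edge and invoking connectedness; you correctly flag this at the end, so the gap is one of exposition rather than substance.
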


To begin we require the following simple lemma.

\begin{lemma} \label{L:rhogreater2}
Let $G$ be a connected graph with a vertex $v$ such that the degree $d\left(v\right)\geq4$. Then $\rho\left(G\right)\geq2$, and $\rho\left(G\right)=2$ if and only if $G=K_{1,4}$.
\end{lemma}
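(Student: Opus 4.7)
The plan is to exhibit $K_{1,4}$ as essentially a subgraph of $G$ and then invoke part (ii) of Lemma~\ref{L:pf}. First I would recall (or quickly verify) that $\rho(K_{1,4}) = 2$: the star $K_{1,n}$ has characteristic polynomial $x^{n-1}(x^2 - n)$, so its spectral radius equals $\sqrt{n}$, which is $2$ when $n = 4$.

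Using the hypothesis $d(v) \geq 4$, I would choose four distinct neighbours $u_1, u_2, u_3, u_4$ of $v$ and define $H$ to be the spanning subgraph of $G$ on vertex set $V(G)$ whose only edges are $vu_1, vu_2, vu_3, vu_4$. Then $A(G) - A(H)$ is entrywise nonnegative, and $H$ consists of one copy of $K_{1,4}$ together with $|V(G)| - 5$ isolated vertices; since the spectrum of a disjoint union is the union of the spectra, $\rho(H) = \rho(K_{1,4}) = 2$.

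Lemma~\ref{L:pf}(ii) now yields $\rho(G) \geq \rho(H) = 2$, with equality if and only if $G \cong H$. Because $G$ is connected while $H$ has a single connected component of order $5$, such an isomorphism forces $|V(G)| = 5$ and $G = K_{1,4}$; conversely, $\rho(K_{1,4}) = 2$ by the initial computation, so this case really is attained.

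I do not expect a real obstacle here: both ingredients, namely the spectral radius of a star and the strict monotonicity of $\rho$ under edge addition for connected graphs, are standard. The only small point requiring care is that $H$ must be taken as a spanning subgraph of $G$ (so that $A(G)$ and $A(H)$ have the same size), after which connectedness of $G$ is what rules out the disjoint-union cases in the equality analysis.
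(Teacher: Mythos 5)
Your proposal is correct and uses essentially the same idea as the paper: locate a copy of $K_{1,4}$ centred at $v$ and apply the Perron--Frobenius comparison of Lemma \ref{L:pf}(ii). Your formulation via a spanning subgraph $H$ (so that $A(G)-A(H)$ is automatically well-defined and nonnegative, and connectedness of $G$ settles the equality case in one stroke) is in fact slightly cleaner than the paper's, which instead splits into cases according to whether the induced subgraph on $v$ and four neighbours is $K_{1,4}$, is $K_{1,4}$ with extra edges, or is a proper induced subgraph of $G$.
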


\begin{proof}
Consider the vertex $v$ with $d\left(v\right)\geq4$ and four of its adjacent vertices. These vertices will form a graph isomorphic to the star $K_{1,4}$ or $K_{1,4}$ with some extra edges, $K_{1,4}\cup E$ say, where $E$ is the set of extra edges. If the graph $G$ is $K_{1,4}$ then the spectral radius is $2$ and if $K_{1,4}$ is a proper induced subgraph of $G$ then $\rho\left(G\right)>2$ by Perron-Frobenius (Lemma \ref{L:pf}). 

If $G$ is $K_{1,4}\cup E$ then we note that $A(K_{1,4}\cup E)-A(K_{1,4})$ is non-negative so by Perron-Frobenius we have that $\rho(K_{1,4}\cup E)>\rho(K_{1,4})=2$. Finally, if $K_{1,4}\cup E$ is a proper induced subgraph of $G$ then again we get $\rho\left(G\right)>2$ by Perron-Frobenius.
\end{proof}

We are now ready to prove our main result.

\begin{theorem} \label{T:hoffsmith0}
Let $G\ne K_{1,4}$ be a graph with a vertex $v$ such that $d\left(v\right)\geq4$. Expand the vertex $v$ into two adjacent vertices $v_1$ and $v_2$ such that $d\left(v_1\right)\geq2$ and $d\left(v_2\right)\geq2$. Call this graph $G_v$, then $\rho\left(G_v\right)<\rho\left(G\right)$.
\end{theorem}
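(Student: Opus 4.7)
The plan is to take the Perron eigenvector $x$ of $G$, with eigenvalue $\rho = \rho(G)$ (strictly positive entries by Lemma~\ref{L:pf}), and build a positive vector $y$ on $V(G_v)$ satisfying $A(G_v)\,y \leq \rho y$ componentwise with strict inequality in at least one coordinate. If $z$ denotes the positive Perron eigenvector of $G_v$, symmetry of $A(G_v)$ then gives $\rho(G_v)\,z^T y = z^T A(G_v)\,y < \rho\, z^T y$, and dividing by $z^T y > 0$ yields $\rho(G_v) < \rho$.

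Let $N_1, N_2$ be the partition of $N_G(v)$ into the neighbors of $v_1$ and $v_2$ in $G_v$, with $|N_1|, |N_2| \geq 2$ (as depicted in Figure~\ref{F:hoffsmithsimple} and indicated by the abstract's phrase ``create an internal path''), and set $S_i = \sum_{u \in N_i} x(u)$, so that the eigenvalue equation at $v$ reads $S_1 + S_2 = \rho x(v)$. I extend $x$ to $y$ on $V(G_v)$ by keeping the values on $V(G) \setminus \{v\}$ unchanged and setting $y(v_i) = \alpha_i$, where $(\alpha_1, \alpha_2)$ is the unique solution of the $2 \times 2$ system $\rho \alpha_1 = S_1 + \alpha_2$, $\rho \alpha_2 = S_2 + \alpha_1$; explicitly,
\[
\alpha_1 = \frac{\rho S_1 + S_2}{\rho^2 - 1}, \qquad \alpha_2 = \frac{S_1 + \rho S_2}{\rho^2 - 1},
\]
both positive. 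By construction $(A(G_v)\,y)(v_i) = \rho y(v_i)$, and for $u \in V(G_v) \setminus (N(v) \cup \{v_1, v_2\})$ the $u$-neighborhood in $G_v$ matches that in $G$, so $(A(G_v)\,y)(u) = (A(G)\,x)(u) = \rho x(u)$. At $u \in N_i$ the only change is that $v$ has been replaced by $v_i$, giving $(A(G_v)\,y)(u) = \rho x(u) - x(v) + \alpha_i$; the proof therefore reduces to showing $\alpha_i < x(v)$ strictly for $i = 1, 2$.

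Using $S_1 = \rho x(v) - S_2$ in the formula for $\alpha_1$ shows $\alpha_1 \leq x(v) \Leftrightarrow S_2 \geq x(v)/(\rho - 1)$, and symmetrically $\alpha_2 \leq x(v) \Leftrightarrow S_1 \geq x(v)/(\rho - 1)$. The Perron eigenvalue equation at any neighbor $u$ of $v$ gives $\rho x(u) = \sum_{w \sim u} x(w) \geq x(v)$, so $x(u) \geq x(v)/\rho$; summing over $u \in N_i$ and using $|N_i| \geq 2$ yields $S_i \geq 2 x(v)/\rho$. Since $G \neq K_{1,4}$ contains a vertex of degree at least $4$, Lemma~\ref{L:rhogreater2} forces $\rho > 2$ strictly, hence $2/\rho > 1/(\rho - 1)$ and so $S_i > x(v)/(\rho - 1)$ strictly for both $i$. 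This delivers $\alpha_1, \alpha_2 < x(v)$ and therefore strict inequality $(A(G_v)\,y)(u) < \rho y(u)$ at every $u \in N(v)$, which completes the Perron--Frobenius comparison from the first paragraph.

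The main obstacle is picking the right $\alpha_1, \alpha_2$: solving the ``would-be'' $v_1, v_2$ eigenvalue equations with equality concentrates all of the slack at the vertices of $N(v)$, where the two structural ingredients $\rho > 2$ (from $G \neq K_{1,4}$) and $|N_i| \geq 2$ combine cleanly with the Perron lower bound $x(u) \geq x(v)/\rho$ to give strict inequality. Both ingredients are essential: more symmetric choices such as $\alpha_1 = \alpha_2$, or reversing the roles of $G$ and $G_v$ with $y(v) = x(v_1) + x(v_2)$, demand extra control on the ratio $x(v_1)/x(v_2)$ that is not available in the generically asymmetric case.
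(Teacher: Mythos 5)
Your proof is correct. It shares the paper's overall strategy -- extend the Perron vector of $G$ to a test vector on $G_v$ and conclude by a Perron--Frobenius comparison -- but the key step is executed in a genuinely different way. The paper picks the values at $v_1,v_2$ from $\{z_v,\sum z_{x_i},\sum z_{y_i}\}$ according to a four-way case split on whether $z_v$ dominates each neighbour-sum, places the strict inequality in the $v_1,v_2$ rows, and in the mixed cases needs further subcases (whether some $x_i$ has degree $\geq 2$, whether $s=2$). You instead solve the two would-be eigenvalue equations at $v_1,v_2$ exactly, which fixes $\alpha_1,\alpha_2$ uniquely, makes those two rows hold with equality, and concentrates all the slack on the rows indexed by $N_G(v)$; the single remaining inequality $\alpha_i<x(v)$ then follows uniformly from $S_j\geq 2x(v)/\rho>x(v)/(\rho-1)$, using each hypothesis ($|N_j|\geq 2$ and $\rho>2$ via Lemma~\ref{L:rhogreater2}) exactly once. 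This removes the case analysis entirely and makes it transparent where each assumption enters. One point in your favour worth recording: you read the hypothesis ``$d(v_i)\geq 2$'' as ``$v_i$ has at least two neighbours among $N_G(v)$,'' i.e.\ $s,t\geq 2$, which is what the paper's own proof uses and what the abstract and Figure~\ref{F:hoffsmithsimple} intend; the literal reading (degree in $G_v$, counting the edge $v_1v_2$) would be false -- e.g.\ splitting the centre of $K_{1,5}$ as $1+4$ raises the spectral radius from $\sqrt{5}$ to $\sqrt{3+\sqrt{5}}$. Like the paper, you implicitly assume $G$ is connected so that the Perron--Frobenius facts apply, but that is a defect shared with the original and not introduced by your argument.
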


\begin{proof}
Let $A\left(G\right)z=\rho\left(G\right)z$ with $z>0$ (that is, $z$ is the eigenvector for the largest eigenvalue $\rho$ of $G$). Let the vertices adjacent to $v$ in $G$ be $x_1,\ldots,x_s,y_1,\ldots,y_t$ such that in $G_v$ the vertices $x_1,\ldots,x_s$ are adjacent to $v_1$ and $y_1,\ldots,y_t$ are adjacent to $v_2$.
Let $\hat{z}$ be a vector of length $\left|V\left(G_v\right)\right|$ where the coordinates corresponding to the vertices of $G_v\setminus\left\{v_1,v_2\right\}$ are the same as those in $z$ corresponding to the vertices in $G\setminus\left\{v\right\}$. We will choose what $\hat{z}_{v_1}$ and $\hat{z}_{v_2}$ are and consider the parts of the vector $A\left(G_v\right)\hat{z}$ that are affected by these. We find that

$A\left(G_v\right)\hat{z}=$
\begin{displaymath}
\left(
\begin{array}{c|ccc|cc|ccc|c}
 &  &  &  &  &  &  &  & & \\ \hline
 &  &  &  & 1 & 0 &  &  & & \\
 &  &  &  & \vdots & \vdots &  &  & & \\
 &  &  &  & 1 & 0 &  &  & & \\ \hline
$\ $ & 1 & \cdots & 1 & 0 & 1 & 0 & \cdots & 0 & $\ $ \\
 & 0 & \cdots & 0 & 1 & 0 & 1 & \cdots & 1 & \\ \hline
 &  &  &  & 0 & 1 &  &  & & \\
 &  &  &  & \vdots & \vdots &  &  & &  \\
 &  &  &  & 0 & 1 &  &  & & \\ \hline
 &  &  &  &  &  &  &  &  & \\
\end{array}
\right)
\left(
\begin{array}{c}
 \\ \hline
z_{x_1} \\
\vdots \\
z_{x_s} \\ \hline
\hat{z}_{v_1} \\
\hat{z}_{v_2} \\ \hline
z_{y_1} \\
\vdots \\
z_{y_t} \\ \hline
 \\
\end{array}
\right)
=
\left(
\begin{array}{c}
 \\ \hline
\rho z_{x_1} - z_v + \hat{z}_{v_1} \\
\vdots \\
\rho z_{x_s} - z_v + \hat{z}_{v_1} \\ \hline
\sum z_{x_i} + \hat{z}_{v_2} \\
\sum z_{y_i} + \hat{z}_{v_1} \\ \hline
\rho z_{y_1} - z_v + \hat{z}_{v_2} \\
\vdots \\
\rho z_{y_t} - z_v + \hat{z}_{v_2} \\ \hline
 \\
\end{array}
\right)
\begin{array}{c}
\\
\\
(a) \\
\\
(b) \\
(c) \\
\\
(d) \\
\\
\\
\end{array}
\end{displaymath}
where we have grouped the equations in the resulting vector as (a), (b), (c) and (d).

There are four cases to consider:
\begin{enumerate}
	\item  $z_v\geq\sum z_{x_i}$ and $z_v\geq\sum z_{y_i}$
	\item  $z_v\geq\sum z_{x_i}$ and $z_v < \sum z_{y_i}$
	\item  $z_v < \sum z_{x_i}$ and $z_v\geq\sum z_{y_i}$
	\item  $z_v < \sum z_{x_i}$ and $z_v < \sum z_{y_i}$.
\end{enumerate}
In each case we will show that $A\left(G_v\right)\hat{z}<\rho\left(G\right)\hat{z}$, which then gives $\rho\left(G_v\right)<\rho\left(G\right)$ by Perron-Frobenius.

Case 1. Set $\hat{z}_{v_1}=\hat{z}_{v_2}=z_v$. Equations (a) and (d) then become $\rho z_{x_i}$ and $\rho z_{y_i}$ respectively. Since $\sum z_{x_i} \leq z_v$ and $\sum z_{y_i} \leq z_v$ equations (b) and (c) give $\sum z_{x_i} + z_v \leq 2z_v < \rho z_v$ and $\sum z_{y_i} + z_v \leq 2z_v < \rho z_v$ since Lemma \ref{L:rhogreater2} gives $\rho > 2$. These strict inequalities give the result.

Case 2. Set $\hat{z}_{v_1}=\sum z_{x_i}$ and $\hat{z}_{v_2}=z_v$. Equation (a) becomes $\rho z_{x_i} - z_v + \hat{z}_{v_1}$ for each $i=1,\ldots,s$ but $\sum z_{x_i} \leq z_v$ so $\rho z_{x_i} - z_v + \hat{z}_{v_1} \leq \rho z_{x_i} - z_v + z_v = \rho z_{x_i}$. Equation (c) becomes $\sum z_{x_i} + \sum z_{y_i} = \rho z_v = \rho \hat{z}_{v_2}$ and equation (d) is equal to $\rho z_{y_i}$ for $i=1,\ldots,t$. Our strict inequality comes from equation (b) but in different places depending on a few subcases. Now, $\rho z_{x_i} = z_v + \Sigma_i$ for each $i=1,\ldots,s$ where $\Sigma_i \geq 0$. Therefore $z_v \leq \rho z_{x_i}$ for each $i$ and $s z_v \leq \rho \sum z_{x_i}$. Since $s\geq 2$ equation (b) becomes $\sum z_{x_i} + z_v \leq 2 z_v \leq s z_v \leq \rho \sum z_{x_i} = \rho \hat{z}_{v_1}$.
If $d\left(x_i\right)\geq2$ for at least one $i$ then for that $i$ we have $\Sigma_i > 0$ so $z_v < \rho z_{x_i}$ and $s z_v < \rho \sum z_{x_i}$ giving us the strict inequality. If $d\left(x_i\right)=1$ for all $i$ and $s>2$ then $2 z_v < s z_v$ and again we are done. If $d\left(x_i\right)=1$ and $s=2$ then $z_{x_1}=z_{x_2}$ so $2 z_{x_1} = \sum z_{x_i} \leq z_v = \rho z_{x_i}$. However, $\rho>2$ so $2z_{x_1}<z_v$ and then $\sum z_{x_i} + z_v < 2 z_v = \rho \sum z_{x_i} = \rho \hat{z}_{v_1}$.

Case 3. Set $\hat{z}_{v_1}=z_v$ and $\hat{z}_{v_2}=\sum z_{y_i}$. The proof then follows as in Case 2.

Case 4. Set $\hat{z}_{v_1}=\hat{z}_{v_2}=z_v$. Equations (a) and (d) again become $\rho z_{x_i}$ and $\rho z_{y_i}$ respectively. Since $z_v < \sum z_{x_i}$ and $z_v < \sum z_{y_i}$ equations (b) and (c) give
\begin{eqnarray*}
\sum z_{x_i} + z_v & < \sum z_{x_i} + \sum z_{y_i} = \rho z_v, \\
\sum z_{y_i} + z_v & < \sum z_{y_i} + \sum z_{x_i} = \rho z_v.
\end{eqnarray*}
Again these strict inequalities give the result in this case.
\end{proof}

As a consequence of Theorem \ref{T:hoffsmith0} we can provide an alternative proof of the result of Simi\'{c} in \cite{Si1987} about splitting vertices.

\begin{corollary}[see \cite{Si1987}]
Let $G$ be a graph with a vertex $v$, and let $W_1 \cup W_2$ be a non-trivial bipartition of the vertices adjacent to $v$. Let the graph $G'$ be formed by taking $G \backslash v$ and including two new non-adjacent vertices $v_1$ and $v_2$, where $v_i$ is adjacent to all of the vertices in $W_i$ ($i=1,2$). Then $\rho(G')<\rho(G)$.
\end{corollary}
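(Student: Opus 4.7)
The approach rests on a simple structural observation: for any $v$ and any nontrivial bipartition $W_1 \cup W_2$ of $N(v)$, the graph $G'$ of the corollary equals the graph $G_v$ of Theorem \ref{T:hoffsmith0} with the edge $v_1 v_2$ removed. Both are obtained by replacing $v$ with two vertices inheriting the neighborhoods $W_1, W_2$, differing only in whether $v_1 v_2$ is an edge. This reduces the corollary to one application of Theorem \ref{T:hoffsmith0} combined with Perron-Frobenius monotonicity under edge deletion.

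In the main regime $d(v) \geq 4$ and $G \neq K_{1,4}$, the bipartition being nontrivial gives $d_{G_v}(v_i) = |W_i| + 1 \geq 2$, so Theorem \ref{T:hoffsmith0} applies and yields $\rho(G_v) < \rho(G)$. Since $G_v$ is connected (the edge $v_1 v_2$ preserves connectedness across the split) and $G'$ is obtained from it by deleting a single edge, Lemma \ref{L:pf}(ii) gives $\rho(G') < \rho(G_v)$; chaining the two strict inequalities yields $\rho(G') < \rho(G)$.

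The remaining cases, $d(v) \in \{2,3\}$ and $G = K_{1,4}$, lie outside the hypotheses of Theorem \ref{T:hoffsmith0} and must be argued separately. For these I would recycle the eigenvector technique of the preceding proof with a single, uniform choice: letting $z > 0$ be the Perron eigenvector of $G$, set $\hat z_{v_1} = \hat z_{v_2} = z_v$ and $\hat z = z$ off $\{v_1, v_2\}$. Because $v_1 v_2$ is not an edge of $G'$, the $v_1$-coordinate of $A(G')\hat z$ equals $\sum z_{x_i} = \rho(G) z_v - \sum z_{y_j}$, which is strictly less than $\rho(G) z_v = \rho(G)\hat z_{v_1}$ (strict because $|W_2| \geq 1$ forces $\sum z_{y_j} > 0$), and symmetrically at $v_2$; all remaining coordinates give equalities. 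Since $\hat z > 0$ and $A(G')\hat z \leq \rho(G)\hat z$ with strict inequality somewhere, a standard Perron-Frobenius argument delivers $\rho(G') < \rho(G)$.

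The main obstacle is precisely this case split: Theorem \ref{T:hoffsmith0} handles the bulk regime cleanly, but its hypotheses exclude the small-degree cases and $K_{1,4}$, which must be treated by direct computation. Conveniently, these exceptional cases are the easiest ones, because the absence of the edge $v_1 v_2$ removes the cross-terms that forced the four-case analysis in the proof of Theorem \ref{T:hoffsmith0}, so the single eigenvector choice $\hat z_{v_1} = \hat z_{v_2} = z_v$ suffices uniformly.
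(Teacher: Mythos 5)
Your proposal is correct, and for the main case $d(v)\geq 4$ it coincides with the paper's argument: apply Theorem \ref{T:hoffsmith0} to obtain $G_v$, then delete the edge $v_1v_2$ and invoke Lemma \ref{L:pf}(ii). Where you genuinely diverge is in the low-degree cases. The paper handles $d(v)=2$ and $d(v)=3$ by a further case analysis on the local structure around $v$ (internal path, pendent path, cycle, or disconnecting vertex), subdividing twice via Theorem \ref{T:hoff} and then deleting a vertex. You instead run the eigenvector perturbation directly on $G'$: with $\hat z_{v_1}=\hat z_{v_2}=z_v$, the absence of the edge $v_1v_2$ makes every coordinate of $A(G')\hat z$ at most the corresponding coordinate of $\rho(G)\hat z$, with strict inequality at $v_1$ and $v_2$ because each $W_i$ is nonempty. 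This computation is valid, and it is in fact uniform in $d(v)\geq 2$: it proves the whole corollary in one stroke, with no appeal to Theorem \ref{T:hoffsmith0} or to Hoffman--Smith at all (it is essentially Simi\'{c}'s original direct proof). What the paper's route buys is the point of the section, namely that the corollary is a formal consequence of Theorem \ref{T:hoffsmith0} together with subdivision; what your route buys is brevity and the avoidance of the slightly delicate internal-path/pendant/cycle trichotomy for $d(v)=2$. One detail worth making explicit in your exceptional-case argument: $G'$ may be disconnected, but every component of $G'$ contains $v_1$ or $v_2$ (each vertex of $G\setminus v$ reaches $v$ in $G$ through some neighbour of $v$), so applying the left Perron vector componentwise still yields strictness for every component, and hence $\rho(G')<\rho(G)$.
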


\begin{proof}
To prove this we shall consider three cases: $d\left(v\right)=2$, $d\left(v\right)=3$ or $d\left(v\right)\geq4$ (note that the non-trivial bipartition of the vertices adjacent to $v$ excludes the possibility that $d\left(v\right)=1$; in that case the largest eigenvalue stays the same).

When $d\left(v\right)=2$, consider whether $v$ is on an internal path or not. If it is, we can subdivide an edge between $v$ and one of its neighbours twice. After doing this we can readily spot a vertex which upon removal induces the graph $G'$. Subdivision and Perron-Frobenius (Theorem \ref{T:hoff} and Lemma \ref{L:pf}, respectively) then tell us that the largest eigenvalue has strictly decreased. If $v$ is not on an internal path it is either on a pendent path, or on a cycle, $C_n$. In the former case $G'$ consists of two disconnected graphs, both subgraphs of $G$, so Perron-Frobenius gives us the result. The largest eigenvalue of $C_n$ is $2$ for all $n$ (as the eigenvalues are $2\cos\left(2\pi j/n\right)$ for $j=0,\ldots,n-1$) and clearly $P_n$ is a subgraph of $C_{n+1}$ so must have a strictly smaller largest eigenvalue.

When $d\left(v\right)=3$, we simply use Perron-Frobenius if $G'$ is disconnected or subdivision twice followed by Perron-Frobenius if not. Finally, if $d\left(v\right)\geq4$, then we simply compare the adjacency matrix of $G'$ with the same graph found using Theorem \ref{T:hoffsmith0} and the result follows by Perron-Frobenius.
\end{proof}

Another application of Theorem \ref{T:hoffsmith0} has been in the search for trivial Salem graphs, and a detailed description of this can be found in the author's thesis (see Chapter 4 of \cite{Gu2013T}). We shall also mention a generalisation of our main result in Theorem \ref{T:expandkn} below. An alternative way of thinking about the subdivision of the vertex $v$ in Theorem \ref{T:hoffsmith0} is that we have expanded $v$ into a $K_2$, the complete graph on two vertices, and shared the neighbours around the two new vertices. A natural question to ask is will the same result hold if we expand the vertex into a complete graph of any size? The answer is yes, provided the vertex is of degree at least $n^2$ and the graph is not isomorphic to $K_{1,n^2}$, although we will not give a full proof here; the method is the same as the proof of Theorem \ref{T:hoffsmith0} and complete details can also be found in the author's thesis (\cite{Gu2013T}). 

Figure \ref{F:hoffsmithcomplex} should help clarify the idea of expanding a vertex, again only showing the relevant vertices and edges of $G$, and a formal description is found below in the statement of Theorem \ref{T:expandkn}.

\begin{figure}[h]
\begin{center}
\begin{tabular}{cc}
\begin{tikzpicture}[scale=0.6, auto] 
\draw (-4,4) -- (4,4);
\draw (4,4) -- (4,-2.5);
\draw (4,-2.5) -- (-4,-2.5);
\draw (-4,-2.5) -- (-4,4);
\draw (-1,0) arc (0:360:0.75cm);
\draw (0,1) arc (-90:270:0.75cm);
\draw (1,0) arc (180:-180:0.75cm);
\node[] at (0,1.75) {$W_1$};
\node[] at (1.75,0) {$W_2$};
\node[] at (-1.75,0) {$W_n$};
\foreach \pos/\name in
{{(0,0)/v}}
\node[vertex] (\name) at \pos {};
\foreach \pos/\name in
{{(-0.85,0)/aa},{(-0.6,0)/ab},{(-0.35,0)/ac},{(0,0.85)/ba},{(0,0.6)/bb},{(0,0.35)/bc},{(0.85,0)/da},{(0.6,0)/db},{(0.35,0)/dc},{(0,-1.5)/fa},{(-1,-1)/fb},{(1,-1)/fc}}
\node[vertexsm] (\name) at \pos {};
\foreach \pos/\name in
{{(-1.3,-0.4)/ga},{(-1.3,0.4)/gb},
{(-0.4,1.3)/ha},{(0.4,1.3)/hb},
{(1.3,-0.4)/ia},{(1.3,0.4)/ib}}
\node[] (\name) at \pos {};
\foreach \edgetype/\source/ \dest in {pedge/v/ga,pedge/v/gb,pedge/v/ha,pedge/v/hb,pedge/v/ia,pedge/v/ib}
\path[\edgetype] (\source) -- (\dest);
\node[] at (0,-0.5) {$v$};
\foreach \pos/\name in
{{(-4.5,0)/zl},{(4.5,0)/zr},{(0,4.5)/zt},{(0,-2.75)/zb}}
\node[] (\name) at \pos {};
\end{tikzpicture}
&
\begin{tikzpicture}[scale=0.6, auto] 
\draw (-4,4) -- (4,4);
\draw (4,4) -- (4,-2.5);
\draw (4,-2.5) -- (-4,-2.5);
\draw (-4,-2.5) -- (-4,4);
\draw (1,0) arc (0:360:1cm);
\node[] at (0,0) {$K_n$};
\draw (-2,0) arc (0:360:0.75cm);
\draw (0,2) arc (-90:270:0.75cm);
\draw (2,0) arc (180:-180:0.75cm);
\node[] at (0,2.75) {$W_1$};
\node[] at (2.75,0) {$W_2$};
\node[] at (-2.75,0) {$W_n$};
\foreach \pos/\name in
{{(-1,0)/va},{(0,1)/vb},{(1,0)/vc}}
\node[vertex] (\name) at \pos {};
\foreach \pos/\name in
{{(-1.85,0)/aa},{(-1.6,0)/ab},{(-1.35,0)/ac},{(0,1.85)/ba},{(0,1.6)/bb},{(0,1.35)/bc},{(1.85,0)/da},{(1.6,0)/db},{(1.35,0)/dc},{(0,-2)/fa},{(-1.414,-1.414)/fb},{(1.414,-1.414)/fc}}
\node[vertexsm] (\name) at \pos {};
\foreach \pos/\name in
{{(-2.3,-0.4)/ga},{(-2.3,0.4)/gb},
{(-0.4,2.3)/ha},{(0.4,2.3)/hb},
{(2.3,-0.4)/ia},{(2.3,0.4)/ib}}
\node[] (\name) at \pos {};
\foreach \edgetype/\source/ \dest in {pedge/va/ga,pedge/va/gb,pedge/vb/ha,pedge/vb/hb,pedge/vc/ia,pedge/vc/ib}
\path[\edgetype] (\source) -- (\dest);
\node[] at (0.6,1.2) {$v_1$};
\node[] at (1.35,-0.55) {$v_2$};
\node[] at (-1.2,-0.55) {$v_n$};
\foreach \pos/\name in
{{(-4.5,0)/zl},{(4.5,0)/zr},{(0,4.5)/zt},{(0,-2.75)/zb}}
\node[] (\name) at \pos {};
\end{tikzpicture}
\\
$G$ & $G_{K_n}$
\end{tabular}
\caption{An example of expanding a vertex $v$ (from $G$, with $d\left(v\right)\geq4$) to a $K_n$ to create the graph $G_{K_n}$. Only the relevant vertices and edges of the graphs have been drawn and the $W_i$ are partitions of the neighbours of $v$, each containing at least $n$ of them.}
\label{F:hoffsmithcomplex}
\end{center}
\end{figure}
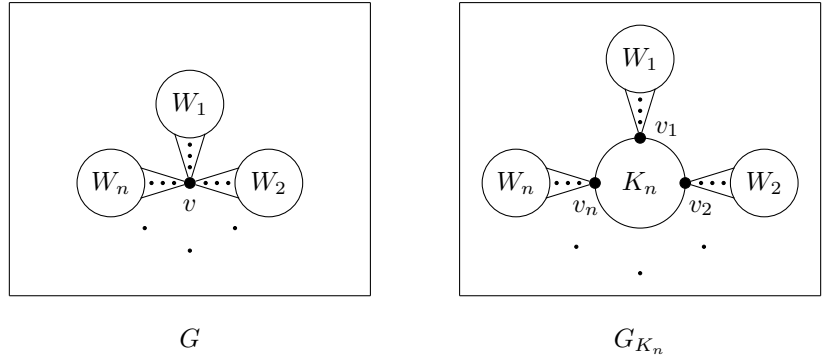

\begin{theorem} \label{T:expandkn}
Let $G\ne K_{1,n^2}$ be a graph with a vertex $v$ such that $d\left(v\right)\geq n^2$. Group the vertices of $G$ adjacent to $v$ into $n$ partitions $W_1,\ldots,W_n$, where each $W_i$ contains at least $n$ vertices. Let the vertices of $K_n$ be labeled $v_1,\ldots,v_n$ and let $G_{K_n}$ be the graph isomorphic to $K_n \cup G\backslash v$ along with edges joining the vertices of $W_i$ of $v_i$ (for $i=1,\ldots,n$). Then $\rho\left(G_{K_n}\right)<\rho\left(G\right)$.
\end{theorem}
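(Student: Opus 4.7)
The approach mirrors the proof of Theorem~\ref{T:hoffsmith0}. Let $z>0$ be the Perron eigenvector of $A(G)$, set $\rho:=\rho(G)$, and write $S_i:=\sum_{u\in W_i}z_u$, so that $\sum_{i=1}^{n}S_i=\rho z_v$. The plan is to build a positive vector $\hat z$ on $V(G_{K_n})$ that equals $z$ on every vertex of $G\setminus\{v\}$, to choose the new coordinates $\hat z_{v_1},\ldots,\hat z_{v_n}$ according to the sizes of the $S_i$'s, and to show $A(G_{K_n})\hat z\le\rho\hat z$ coordinate-wise with at least one strict inequality; Perron-Frobenius (Lemma~\ref{L:pf}) then forces $\rho(G_{K_n})<\rho$.

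Two preparatory facts drive the argument. First, a $K_{1,n^2}$ analogue of Lemma~\ref{L:rhogreater2} gives $\rho>n$: the hypothesis $d(v)\ge n^2$ places $K_{1,n^2}$ inside $G$, the spectral radius of $K_{1,n^2}$ equals $n$, and the exclusion $G\ne K_{1,n^2}$ combined with Lemma~\ref{L:pf}(ii) sharpens the inequality exactly as in the proof of Lemma~\ref{L:rhogreater2}. Second, from $\rho z_u=\sum_{u'\sim u}z_{u'}\ge z_v$ at each $u\in W_i$, summing over the at least $n$ members of $W_i$ gives $\rho S_i\ge n z_v$ for every $i$.

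Now split according to $I:=\{i:S_i>z_v\}$, with $J:=\{1,\ldots,n\}\setminus I$. When $I$ is empty or is all of $\{1,\ldots,n\}$, take $\hat z_{v_i}=z_v$ for every $i$; the rows of $A(G_{K_n})\hat z$ at vertices of $G\setminus\{v\}$ then equal $\rho z_u$, while the row at $v_i$ becomes $S_i+(n-1)z_v$. In the first case this is $\le n z_v<\rho z_v$ by $\rho>n$; in the second it is $<S_i+\sum_{j\ne i}S_j=\rho z_v$ because each $S_j$ with $j\ne i$ exceeds $z_v$.

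The main obstacle is the mixed case, where both $I$ and $J$ are non-empty. Here I would take $\hat z_{v_i}=z_v$ for $i\in I$ and $\hat z_{v_i}=S_i$ for $i\in J$; each $\hat z_{v_i}\le z_v$, so the rows at vertices of $G\setminus\{v\}$ stay bounded by $\rho z_u$. Using $\sum_j S_j=\rho z_v$, the row at $v_i$ with $i\in I$ collapses to $\rho z_v-\sum_{j\in I,\,j\ne i}(S_j-z_v)\le\rho z_v=\rho\hat z_{v_i}$, while the row at $v_i$ with $i\in J$ reduces to $\sum_{j\in J}S_j+|I|z_v\le|J|z_v+|I|z_v=n z_v\le\rho S_i=\rho\hat z_{v_i}$, combining $S_j\le z_v$ on $J$ with the preliminary bound. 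The strict inequality I would extract lives at a row $v_i$ with $i\in J$: either some $S_j<z_v$ on $J$, forcing $\sum_{j\in J}S_j<|J|z_v$ and hence $(A(G_{K_n})\hat z)_{v_i}<\rho\hat z_{v_i}$, or every $S_j=z_v$ on $J$, in which case $\rho\hat z_{v_i}=\rho z_v>n z_v$ strictly by $\rho>n$. Either way Perron-Frobenius closes the argument. The quadratic threshold $d(v)\ge n^2$ and the hypothesis $|W_i|\ge n$ enter in exactly one place, namely the bound $\rho S_i\ge n z_v$, which is what makes the mixed case go through.
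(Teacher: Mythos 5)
Your argument is correct. Note that the paper itself gives no proof of Theorem \ref{T:expandkn} --- it defers entirely to the author's thesis and merely asserts that the argument generalises Theorem \ref{T:hoffsmith0} --- so the only available benchmark is the $n=2$ prototype, and your write-up is exactly that generalisation, with all details checking out: the analogue of Lemma \ref{L:rhogreater2} gives $\rho>n$ because $\rho\left(K_{1,n^2}\right)=n$; the row of $A\left(G_{K_n}\right)\hat{z}$ at $u\in W_i$ is $\rho z_u-z_v+\hat{z}_{v_i}\leq\rho z_u$ whenever $\hat{z}_{v_i}\leq z_v$; the row at $v_i$ is $S_i+\sum_{j\ne i}\hat{z}_{v_j}$; and your three cases on $I=\left\{i:S_i>z_v\right\}$ (empty, full, mixed) exhaust all possibilities, each yielding $A\left(G_{K_n}\right)\hat{z}\leq\rho\hat{z}$ with strictness in at least one coordinate, which is precisely how the paper invokes Perron--Frobenius in Theorem \ref{T:hoffsmith0}. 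One point where you genuinely improve on the prototype: in Case 2 of Theorem \ref{T:hoffsmith0} the author extracts the strict inequality at row (b) via a fiddly sub-case analysis on whether some $d\left(x_i\right)\geq2$, whether $s>2$, and the residual case $s=2$ with pendant neighbours; your dichotomy --- either some $S_j<z_v$ on $J$, giving $\sum_{j\in J}S_j+\left|I\right|z_v<nz_v\leq\rho S_i$, or all $S_j=z_v$ on $J$, giving $nz_v<\rho z_v$ directly from $\rho>n$ --- subsumes all of that in two lines and would streamline the published $n=2$ proof as well. The only hypothesis you leave tacit is $n\geq2$ (for $n=1$ the statement is vacuous since $G_{K_1}=G$), which the paper also leaves implicit.
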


\begin{proof}
A full proof can be found in \cite{Gu2013T} but can also be seen as a generalisation of the proof of Theorem \ref{T:hoffsmith0}.
\end{proof}


\begin{thebibliography}{10}

\bibitem{CRS1997} D.~Cvetkovi\'{c}, P.~Rowlinson, S.~Simi\'{c}, \emph{Eigenspaces of graphs}, Cambridge University Press 66, 1997.

\bibitem{GoRo2001} C.~Godsil, G.~Royle, \emph{Algebraic Graph Theory}, Springer New York, 2001.

\bibitem{Gu2013T} L.~Gumbrell \emph{On spectral constructions for Salem graphs} PhD thesis (2013)

\bibitem{HoSm1975} A.J.~Hoffman, J.H.~Smith, \emph{On the spectral radii of topologically equivalent graphs}, Recent Advances in Graph Theory, ed. M. Fiedler, Academia Praha, (1975), 273--281.

\bibitem{Si1987} S.~Simi{\'c}, \emph{Some results on the largest eigenvalue of a graph}, Ars Combinatoria, 24A (1987), 211--219.

\end{thebibliography}
\end{document}